\DeclareMathOperator{\SR}{SR}
\newcommand{\TOTO}{\ensuremath{\mathsf{TOTO}}\xspace} 
\newcommand{\TOOB}{\ensuremath{\mathsf{TOOB}}\xspace}
\def\PP{\mathbb P}
\def\eps{\varepsilon}
\renewcommand{\th}{^{\mbox{\scriptsize th}}}
\newtheorem{lemma}{Lemma}
\newtheorem{proposition}[lemma]{Proposition}
\newtheorem{theorem}[lemma]{Theorem}
\DeclareMathOperator{\Av}{Av}
\DeclareMathOperator{\Cat}{Cat}
\DeclareMathOperator{\Id}{Id}
\DeclareMathOperator{\qdepth}{qd}
\title{A logical limit law for $231$-avoiding permutations}
\author[M. Albert, M. Bouvel, V. Féray and M. Noy]{Michael Albert\affiliationmark{1}  \and Mathilde Bouvel\affiliationmark{2}  \and Valentin Féray\affiliationmark{3}
\and Marc Noy\affiliationmark{4} }
  \affiliation{
  Department of Computer Science, University of Otago, Dunedin, New Zealand\\
  Université de Lorraine, CNRS, Inria, LORIA, F-54000 Nancy, France\\
  Université de Lorraine, CNRS, IECL, F-54000 Nancy, France\\
  Department of Mathematics and Institute of Mathematics, Universitat Politècnica de Catalunya, Barcelona, Spain}
  \keywords{pattern avoiding permutations, first order logic, logical limit laws, analytic combinatorics}
\begin{document}
\publicationdata{vol. 26:1, Permutation Patterns 2023}{2024}{1}{10.46298/dmtcs.11751}{2023-08-22; 2023-08-22; 2023-12-08}{2023-12-22}
 
\maketitle

\begin{abstract}~ 

We prove that the class of 231-avoiding permutations satisfies a logical limit law,
i.e.~that for any first-order sentence $\Psi$, in the language of two total orders, the probability $p_{n,\Psi}$ that
a uniform random 231-avoiding permutation of size $n$ satisfies $\Psi$
admits a limit as $n$ is large.
Moreover, we establish two further results about the behavior and value of $p_{n,\Psi}$:
(i) it is either bounded away from $0$, or decays exponentially fast;
(ii) the set of possible limits is dense in $[0,1]$.
Our tools come mainly from analytic combinatorics and singularity analysis.
\end{abstract}

\section{Introduction}

\subsection{Background}

For any model of random combinatorial structures (e.g., permutations, graphs, \dots),
a natural problem is to compute the asymptotic probability that they satisfy a property of interest.
A step further consists in considering this problem for general sets of properties.
To this end, it is useful to use {\em finite model theory}.
In this context, the combinatorial objects are seen as models of some logical theory
(e.g., graphs are finite sets with a binary symmetric anti-reflexive relation).
Then finite model theory allows one to define a whole hierarchy of properties on our object:
(existential) first-order properties, (existential/monadic) second order properties, and so on. 
In this paper, we will be interested in first-order properties,
which are the ones that can be written using only quantifiers on elements (and not on sets), equalities between elements,
the relation(s) of the language (e.g.~two elements being neighbours in graphs)
and boolean operations; see below for an informal discussion on the expressive
power of first-order logic, and
Section~\ref{sec:logic} for a formal definition.

Let us consider a sequence of random combinatorial structures $ s_n$, for example graphs or permutations,
seen as models of a logical theory.
We say that $s_n$
satisfies a (first-order) logical limit law\footnote{In some texts, the name ``convergence law'' is used instead of ``logical limit law''.}
if, for any first-order sentence $\Psi$,
the probability that $s_n$ satisfies $\Psi$, denoted $\mathbb P( s_n \models \Psi)$,
 has a limit as $n$ tends to $+\infty$.
If, additionally, the limit is always $0$ or $1$,
then $s_n$ satisfies a (first-order) 0-1 law.

This formal logic approach in discrete probability started around 1970
with the seminal works of \cite{GKLT} and~\cite{FaginFiniteModels}, 
who independently proved that  a uniform random simple graph $G_n$ 
with $n$ vertices satisfies a 0-1 law. 
More generally, for the Erd\H{o}s-Rényi model $G(n,p)$ 
with $p \sim n^\alpha$ ($\alpha \in (0,1)$), 
a remarkable result of~\cite{ShelahSpencer_ZeroOneLaws} states
that $G(n,p)$ satisfies a 0-1 law 
if and only if $\alpha$ is irrational.
Recently, a collection of results has appeared regarding
existence or non-existence of 0-1/logical limit laws for uniform random graphs
taken in a given graph class; see, {\em e.g.},~\cite{LimitLaws_MinorClosed}
and \cite{convergenceLaw_PerfectGraphs}.

For permutations, fewer results of this type are available.
Two different ways of seeing permutations as models of some logical theory
have been described by \cite{albert2020two}.
We will focus on the one called \TOTO ({\em Theory Of Two Orders}), where permutations
are seen as finite sets, endowed with a pair of linear orders $(A,<_P,<_V)$ 
(comparing respectively the positions and the values of elements of the permutation;
see Section~\ref{sec:logic} for details).
With respect to \TOTO,
 it is known that uniform random permutations 
 $ \sigma_n$ do not satisfy a logical limit law,
i.e.~there exist first-order properties $\Psi$ such that
$\mathbb P( \sigma_n \models \Psi)$ does not have a limit (and actually,
can be taken to oscillate between $0$ and $1$);
see \cite{ConvLaw_Permutations} 
(note that this reference does not use the permutation language,
but considers the equivalent setting of pairs of linear orders)
or \cite{muller2023logical} (where the more general setting of Mallows random permutations
is considered).
On the opposite, uniform layered permutations\footnote{A permutation is layered
if it is an increasing sequence of decreasing runs (of arbitrary length).}
do satisfy a logical limit law \cite{BraunfeldKukla}.
\medskip

{\em Expressive power of the first-order logic on permutations.}
To make things more concrete, let us explain informally which kind of properties
$\Psi$ can be expressed as a first-order property in the \TOTO logic.
 More details can be found in \cite{albert2020two}.
The containment of a given pattern, either in the classical or consecutive sense,
is a first-order property. One can also consider the generalizations 
considered in the literature
(vincular, bivincular, meshed, barred, decorated patterns); see \cite{albert2020two}
for details. This contains many classical statistics on permutations: left-to-right maxima (or other types of records), adjacencies (two elements which are consecutive both in positions and values), indecomposable blocks,~\dots~
For each of these statistics, one can express the fact that a permutation contains exactly/at most/at least $k$ of those (for any fixed $k$), and any boolean combination of these properties. For example, that a permutation contains at most nine inversions and exactly two adjacencies is a first order property. 
One can also express properties of the first/last/maximum/minimum of the permutation, {\em e.g.}, that the minimum of a permutation occurs before its maximum. On the other hand, that a permutation contains an even/odd number of inversions is not a first order property. It is also impossible to consider statistics that compares elements of the domain to elements of the co-domain of the permutations,
such as existence of fixed points, exceedances, \dots

\subsection{Main result}
We recall that a permutation $\sigma$ contains a permutation $\pi$ as a pattern
if there is a subsequence of $\sigma$ which is order-isomorphic to $\pi$. 
For instance,  the permutation $6473512$ contains $231$ as a pattern:
indeed, its subsequence $472$ is order-isomorphic to $231$.
When $\sigma$ and $\pi$ are interpreted as models of \TOTO, 
this just means that $\pi$ is (isomorphic to) a submodel of $\sigma$.
For a given pattern $\pi$, the set of permutations avoiding $\pi$ is denoted $\Av(\pi)$, and for any integer $n$, $\Av_n(\pi)$ denotes the set of permutations of size $n$ in $\Av(\pi)$.
Sets of permutations of the form $\Av(\pi)$, called (principal) permutation classes,
have been widely studied in the enumerative combinatorics literature (see \cite{VatterSurvey} for a survey)
and more recently also from the probabilistic point of view
(see, {\em e.g.},~\cite{PHC_Dichotomy} and references therein).
In this article we consider one of the simplest nontrivial cases, namely the class $\Av(231)$.
We prove a logical limit law for a uniform random 231-avoiding permutation $\sigma_n$ of size $n$.
We also provide two additional results on the possible asymptotic behavior
of $\PP(\sigma_n \models \Psi)$, where $\Psi$ a first order sentence on permutations.

\begin{theorem}
  \label{thm:ConvLaw_Av231}
  For each $n\ge 1$, let $\sigma_n$ be a uniform random 231-avoiding permutation of size $n$. Then $\sigma_n$ satisfies a logical limit law.
  Moreover, 
  \begin{enumerate}
    \item  if $\Psi$ is a first order sentence on permutations,
      then either $\lim \PP(\sigma_n \models \Psi)>0$,
      or there exists $\eps=\eps(\Psi)<1$ such that
      \[ \PP(\sigma_n \models \Psi)= O(\eps^n); \]
    \item the set of limiting probabilities
  $\big\{\! \lim \PP(\sigma_n \models \Psi), \Psi\text{\ FO formula} \big\}$ is dense in $[0,1]$.
  \end{enumerate}
\end{theorem}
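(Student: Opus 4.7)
The plan is to combine the composition method of finite model theory with the analysis of an algebraic system of generating functions. Fix a quantifier depth $d$. By the Ehrenfeucht--Fraïssé theorem, there are only finitely many equivalence classes (call them \emph{$d$-types}) of permutations under FO-equivalence of depth $\le d$, and any sentence $\Psi$ of depth $d$ is equivalent to a disjunction of $d$-types. It therefore suffices to prove, for each $d$-type $\tau$, that $\PP(\bm\sigma_n \text{ has type } \tau)$ has a limit which is either bounded away from $0$ or exponentially small.

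The second step would use the recursive decomposition of 231-avoiders: every $\sigma \in \Av_n(231)$ writes uniquely as $\sigma = \alpha \, n \, \beta$, where $n$ is the maximum at some position $k$, the prefix $\alpha \in \Av_{k-1}(231)$ takes values $\{1,\ldots,k-1\}$, and the suffix $\beta$ takes values $\{k,\ldots,n-1\}$. Since $\alpha$ and $\beta$ are separated in both orders by the singleton $\{n\}$, a standard EF-game composition argument shows that the $d$-type of $\sigma$ depends only on those of $\alpha$ and $\beta$. Setting $F_\tau(z) = \sum_{\sigma} z^{|\sigma|}\,[\text{type}(\sigma) = \tau]$ then gives a positive polynomial system
\[
  F_\tau(z) = [\tau = \tau_\emptyset] + z \sum_{(\tau_1,\tau_2)\mapsto\tau} F_{\tau_1}(z)\, F_{\tau_2}(z),
\]
whose aggregate recovers the Catalan equation $C = 1 + zC^2$ with its unique square-root singularity at $\rho = 1/4$.

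For the analytic part, each $F_\tau$ is algebraic with radius at least $1/4$. I would apply the Drmota--Lalley--Woods machinery to the subsystem of types reachable from $\tau_\emptyset$, combined with the transfer theorem of singularity analysis. This yields the dichotomy: either the radius of $F_\tau$ equals $1/4$, in which case $[z^n]F_\tau \sim c_\tau\, n^{-3/2}\, 4^n$ for some $c_\tau > 0$ and $\PP(\text{type}(\bm\sigma_n) = \tau) \to c_\tau / c > 0$ (with $c$ the leading constant of $\Cat(n)$), or its radius is strictly larger, giving $\PP(\text{type}(\bm\sigma_n) = \tau) = O(\eps^n)$ for some $\eps < 1$. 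Summing over the finitely many $d$-types in the disjunctive expansion of $\Psi$ proves both convergence and the dichotomy~(i).

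For (ii), I would exhibit explicit FO sentences whose limits are dense in $[0,1]$. A concrete family: the sentence ``the maximum of $\sigma$ lies at position $k$'' is FO (of depth depending on $k$), and its probability $\Cat(k-1)\Cat(n-k)/\Cat(n)$ converges to $\Cat(k-1)/4^k$; varying $k$ and combining such disjoint events via Boolean operations yields limits of the form $\sum_{k \in K} \Cat(k-1)/4^k$ for arbitrary finite $K$, which are dense in $[0,1/2]$, and symmetry plus complementation extend density to $[0,1]$. The main obstacle I anticipate lies in the analytic step: the polynomial system above is typically not strongly connected (some $d$-types are realized only by permutations of bounded size, producing polynomial $F_\tau$), so DLW does not apply out of the box. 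A careful treatment of the non-irreducible case is needed to guarantee that every type with radius $1/4$ inherits the full $n^{-3/2}\, 4^n$ behavior of $\Cat(n)$, and to secure a uniform exponential decay rate strictly below $1$ for the remaining types.
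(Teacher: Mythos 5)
Your overall strategy (depth-$k$ types, the $\sigma=\tau\oplus(1\ominus\pi)$ decomposition with an EF-game composition lemma, the refined positive system whose aggregate is $C=1+zC^2$, then singularity analysis) is exactly the paper's, and your part~(ii) is a workable variant of the paper's argument: the events ``maximum at position $k$'' have limiting probabilities $\Cat_{k-1}4^{-k}$ summing to $\tfrac14 C(\tfrac14)=\tfrac12$, finite disjoint unions give all finite subsums, a Kakeya-type condition (each term bounded by the sum of the later ones — this still needs to be checked, and is where the paper invokes Kakeya's lemma) gives density in $[0,\tfrac12]$, and complementation reflects this to $[0,1]$. The paper instead uses two-sided events $(\bm\tau_n\in\mathcal F)\vee(\bm\pi_n\in\mathcal F')$ so that the total mass is $1$ and Kakeya applies directly; either route is fine.

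The genuine gap is precisely the point you defer: ``a careful treatment of the non-irreducible case is needed.'' That treatment is the technical heart of the proof, and without it neither the dichotomy (radius $=1/4$ with $c_\tau>0$ versus radius $>1/4$) nor the convergence law is established — a priori a type could have radius exactly $1/4$ but with vanishing constant or a different subexponential correction, and restricting to ``types reachable from $\tau_\emptyset$'' is not the right reduction. The paper resolves this as follows. First, since $H_k(t_1,t_2)$ is a common out-neighbour of any two vertices of the dependency graph, the graph has a \emph{unique terminal} strongly connected component $\mathcal T_k^\star$. Second, summing the system over all types shows that the Jacobian $M_k(z)$ has \emph{constant column sums} equal to $2zC(z)$, hence equal to $1$ at $z=1/4$; this gives $\SR(M_k(1/4))=\SR(M_k^\star(1/4))=1$, while every column of the complementary block $M_k^\bullet(1/4)$ loses positive mass to $\mathcal T_k^\star$ (again via $H_k(u,u^\star)$), so $\SR(M_k^\bullet(1/4))<1$. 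By the implicit function theorem and Pringsheim, the series of non-terminal types then continue analytically past $1/4$, giving the uniform exponential decay in (i). Third, for the terminal block one substitutes the non-terminal series (analytic beyond $1/4$) as parameters and applies the \emph{analytic} Drmota--Lalley--Woods theorem — the polynomial version does not suffice after this substitution — checking the characteristic-system condition via the spectral radius computation above and aperiodicity by propagating the type of long increasing permutations through the dependency graph. This yields $[z^n]C_t\sim A_t4^n n^{-3/2}$ with $A_t>0$ for all $t\in\mathcal T_k^\star$, which is what your dichotomy needs. Your proposal identifies the obstacle but supplies none of these arguments, so as it stands the analytic step, and with it the main theorem, is unproven.
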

The proof of the logical limit law is inspired by a paper of \cite{woods1997coloring}, proving a logical limit law for uniform random nonplane trees
in monadic second order logic.
It relies on techniques of analytic combinatorics,
in particular on a general result on the type of singularity
of polynomial systems of equations 
(commonly known as the Drmota--Lalley--Woods theorem).
Item 1 above is a consequence of Woods' proof technique.
For the description of the set of limiting probabilities, we exhibit and combine
sufficiently many simple events, whose asymptotic limiting probability is straightforward to compute. We then use a lemma of \cite{KakeyaSubsums}, indicating when the set of subsums of a given convergent series is dense in the relevant interval.
The second part of our result can be compared to the results of~\cite{LimitLaws_MinorClosed} and~\cite{LimitingProbabilities_Gnp},
where the set of limiting probabilities is described for some random graph models.
At least for $\Av(231)$, the picture is simpler in the setting of permutations.

We note that a logical limit law for another permutation class,
namely the class of {\em layered} permutations has
been recently established by \cite{BraunfeldKukla}.
The techniques are different from the ones in the present paper.
Interestingly, none of these techniques is easily adapted to $\Av(321)$
(even though they have the same number of elements of each size,
the classes  $\Av(231)$ and $\Av(321)$ are known to have different structures
in many ways). We do not know whether $\Av(321)$ admits a logical limit law or not\footnote{
{\em Note added in proof.} A logical limit law for the class $\Av(321)$ has been established
by \cite{Oezdemir2023cv_law_321}, using an \enquote{infinite-dimensional version of the Perron--Frobenius theorem}.
}.
More generally, it is not known whether there exists a proper permutation class
for which the logical limit law fails; see \cite[Section 5]{BraunfeldKukla}.
\medskip

{\em What about the \TOOB logic?}
As mentioned above, in~\cite{albert2020two}, two different ways of seeing
permutations as models in some logical theory were considered.
In this paper, we only consider one of them, \TOTO.
The other framework, \TOOB ({\em Theory Of One Bijection}),
regards permutation as maps from a finite set $A$ to itself, 
and consists of a single relation expressing that an element $x$ is sent to $y$.
In \TOOB, one can express condition on cycles of fixed lengths,
for example that a permutation has more than three fixed points 
and at least one cycle of length at least 10.
It is however impossible to compare values of the elements;
in particular conjugate permutations are indistinguishable for \TOOB.

The expressibility of \TOOB is in some sense poorer than that of \TOTO,
and the question of logical limit law is essentially equivalent to the 
convergence of short cycle counts; see \cite[Section 4]{muller2023logical}.
In particular, it is easy to prove that uniform random permutations 
satisfy a logical limit law for \TOOB; see again \cite[Section 4]{muller2023logical}.
We are not aware of logical limit law results for uniform pattern-avoiding permutations 
for \TOOB.
As said above, this amounts to studying their short cycle counts.
Fixed points in 213-avoiding (resp.~123-avoiding and 321-avoiding\footnote{Unlike \TOTO, the \TOOB framework is not invariant 
by the action of all symmetries of the square acting on permutation matrices,
and considering 123-avoiding permutations is not equivalent to 321-avoiding permutations.
It is however still invariant by symmetries along diagonals, 
so that considering 213-avoiding or 132-avoiding permutations are equivalent problems.
As far as we are aware, fixed points in 231-avoiding, or equivalently in 312-avoiding permutations, have not been studied.}) permutations have been studied
in \cite{HoffmanBrownian2,Hoffman2019fixed321}, where convergence in distribution results are proved.
We might expect similar results for the number of cycles of length $k$ (for any fixed $k$)
which would imply a logical limit law in these cases, 
but proving it would require a significant amount of work.

\section{Preliminaries}

\subsection{Permutations as models of a logical theory and first order sentences}
\label{sec:logic}
We present here briefly the logical theory $\TOTO$ (theory of two orders).
Details and general references for finite model theory can be found in \cite{albert2020two}.
The {\em signature} of the theory consists of two binary relations $<_P$ and $<_V$.
The {\em axioms} of the theory specify that $<_P$ and $<_V$ are linear (or total) order relations.
A model in the theory is then a set $A$ endowed with two linear orders, also denoted $<_P$ and $<_V$.
We will only be interested here in finite models.
As explained in \cite{albert2020two}, isomorphism classes of finite models
are naturally indexed by permutations.
Indeed, think of a permutation as its permutation matrix, where $1$s are replaced by points
and $0$s by empty cells. Then $\sigma$ can be identified with the sets $A^\sigma$ of points,
together with the relations $<^\sigma_P$ and $<^\sigma_V$,
comparing respectively the $x$ and $y$-coordinates of points
(or in other terms, their positions and their values
in the permutation). See Fig.~\ref{fig:permutation_TOTO} for an example.

\begin{figure}
\[\begin{tikzpicture}[scale=0.6]
  \draw[gray,very thin] (0,0) grid (5,5);
  \fill (0.5,2.5) circle (0.15) node[below right =-0.1] {\scriptsize $A$};
  \fill (1.5,0.5) circle (0.15) node[below right =-0.1] {\scriptsize $B$};
  \fill (2.5,3.5) circle (0.15) node[below right =-0.1] {\scriptsize $C$};
  \fill (3.5,4.5) circle (0.15) node[below right =-0.1] {\scriptsize $D$};
  \fill (4.5,1.5) circle (0.15) node[below right =-0.1] {\scriptsize $E$};
\end{tikzpicture}\]
\caption{A permutation in matrix form with associated linear orders
$A <_P B <_P C <_P D <_P E$, and
$B <_V E <_V A <_V C <_V D$.}
\label{fig:permutation_TOTO}
\end{figure}
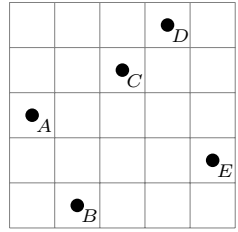

We now define first-order formulas and sentences.
Take an infinite set $\{x,y,z,\dots\}$ of variables.
Atomic formulas are constructed by taking variables and connecting them 
with a relation of the signature or with the equality symbol.
In our case, examples of atomic formulas are $x=z$, $x <_P y$ or $x <_V x$.
First-order formulas are then obtained
inductively from the atomic formulas, as combinations of smaller formulas using
the usual connectives of the first-order logic: negation ($\neg$), conjunction ($\wedge$), 
disjunction ($\vee$), implication ($\Rightarrow$), equivalence ($\Leftrightarrow$),
universal and existential quantification ($\forall x \, \phi$ or $\exists x\, \phi$,
where $x$ is a variable and $\phi$ a formula).
A sentence is a formula that has no free variable, that is to say in which all
variables are quantified. 
For example, $ \exists x\, \exists y\,  (x <_P y\, \wedge\, y <_V x)$
is a first order sentence.

First order sentences describe properties of the models, in our case of permutations.
We say that a permutation $\sigma$ {\em satisfies} a sentence $\Psi$,
and write $\sigma \models \Psi$,
if $\Psi$ holds
true when the variables are interpreted as elements of $A^\sigma$ and when the symbols $<_P$
and $<_V$ are interpreted as $<^\sigma_P$ and $<^\sigma_V$. For example, $\sigma \models \exists x\, \exists y\,  (x <_P y\, \wedge\, y <_V x)$ precisely if $\sigma$ contains two elements that form a $21$ pattern.

\subsection{Logical types}
We start by recalling the notion of \emph{quantifier depth} of a first-order formula.
Informally, this is the maximal number of nested quantifiers in the formula.
Formally, we can define it recursively as follows. If $\Psi$ is an atomic formula
(such as $u=v$, $x<_V y$ or $z <_P t$),
then $\qdepth(\Psi) = 0$. Otherwise:
\begin{align*}
\qdepth(\neg \Psi) &= \qdepth (\Psi), \\
\qdepth(\Psi \vee \theta) = \qdepth(\Psi \wedge \theta) &= \max (\qdepth(\Psi), \qdepth(\theta)), \\
\qdepth(\exists x \, \Psi) = \qdepth(\forall x \, \Psi) &= \qdepth(\Psi) + 1. 
\end{align*}

Fix $k \ge 2$ and consider first-order {\em sentences} of quantifier depth at most $k$.
We consider two first-order sentences to be equivalent if they are satisfied 
by the same set of permutations.
By putting formulas in, say, prenex conjunctive normal forms, we see that, in any theory with finite signature, for each fixed $k\ge 2$,
there are finitely many first-order sentences of quantifier depth at most $k$,
up to equivalence.

We say that two permutations $\sigma$ and $\tau$ are $k$-equivalent,
and write $\sigma \equiv_k \tau$, if they are models for the same first-order
sentences of quantifier depth $k$.
For each fixed $k$, this relation splits the set of permutations into 
{\em finitely many} equivalence classes.
These classes are called \emph{logical types of order $k$}; 
their set is denoted $\mathcal T_k$.

\subsection{Ehrenfeucht-Fraïssé games}
We will make use here of a fundamental result of finite model theory,
relating satisfaction of first-order sentences to a combinatorial game.
We present here this result in the context of permutations (in the $\TOTO$ logic).
We refer to~\cite{albert2020two} for a more detailed specific discussion on permutations
and to~\cite{Gradel2007Finite-model-th} for a general reference on finite model theory.

Let $\alpha$ and $\beta$ be two permutations, and let $k$ be a positive integer. The \emph{Ehrenfeucht-Fra\"{i}ss\'{e} (EF) game of length $k$} played on $\alpha$ and $\beta$ is a game between two players (named \emph{Duplicator} and \emph{Spoiler}) according to the following rules:
\begin{itemize}
\item
The players alternate turns, and Spoiler moves first.
\item
The game ends when each player has had $k$ turns.
\item
At his $i\th$ turn, ($1 \leq i \leq k$) Spoiler chooses either an element $a_i \in \alpha$ or an element $b_i \in \beta$. In response, at her $i\th$ turn, Duplicator chooses an element of the other permutation.
Namely, if Spoiler has chosen $a_i \in \alpha$, then Duplicator chooses an element $b_i \in \beta$, 
and if Spoiler has chosen $b_i \in \beta$, then Duplicator chooses $a_i \in \alpha$.
\item
At the end of the game if the map $a_i \mapsto b_i$ for all $i \le k$
preserves both position and value orders, then Duplicator wins.
Otherwise, Spoiler wins.
\end{itemize}

The connection between EF games and quantifier depth is captured
 in the fundamental theorem of Ehrenfeucht and Fra\"{i}ss\'{e},
 which we state here for permutations.
\begin{proposition}
Two permutations $\alpha$ and $\beta$ are $k$-equivalent
if and only if Duplicator has a winning strategy in the EF game of length $k$ played on $\alpha$ and $\beta$.
\end{proposition}
We will use this result below to prove that 
certain operations on permutations affect the logical types in a prescribed manner; see Lemma~\ref{lem:decompo_logic}.

\subsection{Algebraic systems of equations and the Drmota-Lalley-Woods theorem}
\label{sec:DLW}
As mentioned in the introduction, the proof of the logical limit law 
relies on techniques from analytic combinatorics,
which we now introduce.

Consider a system of equations: for $1 \le i \le d$, one has
\begin{equation}\label{eq:system_general}
  y_i(z)= \Phi_i(z,y_1(z),\cdots,y_d(z)),
\end{equation}
where the $y_i$ are unknown formal power series in $z$
and each $\Phi_i$ is a given formal power series in $z,y_1,\cdots,y_d$ with
{\em non-negative} coefficients.

To such a system, we associate its {\em dependency graph}, which is a
directed graph on the vertex set $\{1,\cdots,d\}$ with the following edges:
there is an edge from $i$ to $j$ if $y_i$ appears in the equation
defining $y_j$, i.e.~if $\frac{\partial \Phi_j}{\partial y_i} \ne 0$.

We recall that a directed graph is said to be {\em strongly connected}
if, for any pair of vertices $(u,v)$, there is an oriented path from $u$ to $v$.
In general, one can consider the {\em strongly connected}  components
of a directed graph $G$. These are maximal induced subgraphs that are strongly connected.
A strongly connected component $C$ of $G$ is said to be {\em terminal},
if there does not exist $v$ in $C$ and $w$ outside $C$ with an oriented edge from $v$ to $w$.

To illustrate these definitions, let us consider the system
\begin{align*}
  y_1&=1+zy_1 y_4 y_5 +z y_2^2;\quad y_2=z y_1 y_5; \quad y_3=1;\\
  y_4&=1+z y_4 +z^2 y_3 y_6; \quad y_5 = z^2 y_2^2 y_4; \quad y_6= z y_4^2.
\end{align*}
Its dependency graph is drawn in Fig.~\ref{fig:dependency}. 
The strongly connected components are represented with a dashed contour,
and the terminal one (which is unique in this case) has a light gray background.
\begin{figure}
  \begin{center}
    \includegraphics{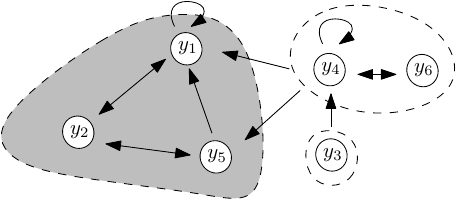}
  \end{center}
  \caption{Example of dependency graph of an algebraic system of equations}
  \label{fig:dependency}
\end{figure}

Finally,
we recall that a series $A=\sum_{n \ge 0} a_n z^n$ is said to be periodic if 
there exists $d\geq2$ and $r$
such that $a_n=0$ unless $n \in d \mathbb Z +r$. 
It is aperiodic otherwise.

In analytic combinatorics, we are interested in the behavior of 
generating series of combinatorial objects around their dominant singularities.
It turns out that solutions of a system of equations such as \eqref{eq:system_general}
have some specific behavior under rather general hypotheses.
This is known as the Drmota--Lalley--Woods theorem.
We use the standard Kronecker symbol $\delta_{i,j}$ defined by $\delta_{i,j} = 1$ if $i=j$ and $0$ otherwise.
\begin{proposition}
 Consider a system as in \eqref{eq:system_general}, where the $\Phi_i$ have nonnegative coefficients, and assume that:
 \begin{enumerate}
   \item the system is nonlinear in the $y_i$'s, {\em i.e.}~there exist indices $i,j,k$ (possibly with repetitions)
     such that $\frac{\partial^2 \Phi_k}{\partial y_i \partial y_j}(z;y_1,\cdots,y_d) \ne 0$;
   \item for each $i$, one has $\Phi_i(0;y_1,\cdots,y_d)=0$
(as power series in $y_1, \ldots, y_d$);
\item there exist $j$ and $k$ such that $\Phi_j(z;0,\cdots,0) \ne 0$ and $\frac{\partial \Phi_k}{\partial z}(z;y_1,\cdots,y_d) \ne 0$ (as power series in $z$ and in $z,y_1,\ldots,y_d$ respectively);
\item the dependency graph of the system is strongly connected;
\item each $\Phi_i$ is convergent for $(r,s_1,\cdots,s_d)$ in a neighbourhood of $(0,\cdots,0)$
  and the intersection of their regions of convergence 
  contains a solution $(r,s_1,\cdots,s_d)$ of the system
  \begin{equation}\label{eq:characteristic_system}
    \begin{cases}
      s_i=\Phi_i(r,s_1,\cdots,s_d) &\text{ for all $i\le d$;}\\
      0=\det\left(\delta_{i,j} - \frac{\partial \Phi_i}{\partial y_j}(r,s_1,\cdots,s_d)\right)_{1 \le i,j \le d}
    \end{cases}
  \end{equation}
\item at least one of the series $y_i$ is aperiodic.
 \end{enumerate}
 Then the system \eqref{eq:system_general} has a unique solution with $y_1(0)=\dots=y_d(0)=0$.
 This solution satisfies that all the $y_i$ have the same radius of convergence $\rho$,
 which is the first coordinate $r$ of the solution of \eqref{eq:characteristic_system}.
 Moreover, for each $i$, there is a {\bf nonzero} constant $A_i$ such that
 \begin{equation}\label{eq:conclusion_DLW}
   [z^n]y_i \sim A_i \,\rho^{-n}\, n^{-3/2}.
 \end{equation}
  \label{thm:DLW}
\end{proposition}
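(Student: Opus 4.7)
The plan is to follow the standard four-step strategy for positive nonlinear systems: construct a formal solution, continue it analytically, analyse the singularity locally, then transfer. First, condition (ii) ($\Phi_i(0;\mathbf{y})=0$) together with positivity of coefficients makes the operator $\mathbf{y}\mapsto \mathbf{\Phi}(z,\mathbf{y})$ a contraction on the space of formal power series vanishing at $z=0$, so a unique formal solution $(y_1,\ldots,y_d)$ with $y_i(0)=0$ exists and has non-negative coefficients. The analytic implicit function theorem, applied inside the joint region of convergence provided by (v), extends $(y_1,\ldots,y_d)$ to an analytic vector-valued function on the real interval $[0,\rho)$, where $\rho$ is the least positive real at which the Jacobian determinant $J(z):=\det(\delta_{i,j}-\partial_{y_j}\Phi_i(z,\mathbf{y}(z)))$ vanishes. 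Pringsheim's theorem then forces $\rho$ to be a singularity of each $y_i$. Moreover the explicit characteristic solution $(r,s_1,\ldots,s_d)$ from hypothesis (v) must satisfy $r=\rho$ and $s_i=y_i(\rho)$, because monotonicity of $\mathbf{y}(z)$ together with positivity leaves no other possibility.

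Next I would use the strong connectivity (iv) to pin down the common singular behaviour. Let $M(z)$ denote the Jacobian matrix $\bigl(\partial_{y_j}\Phi_i(z,\mathbf{y}(z))\bigr)_{i,j}$; strong connectivity makes $M(z)$ an irreducible non-negative matrix on $(0,\rho]$. Perron--Frobenius theory supplies a simple dominant eigenvalue $\lambda(z)>0$ with strictly positive left and right eigenvectors $\mathbf{u},\mathbf{v}$, and the equation $J(\rho)=0$ is equivalent to $\lambda(\rho)=1$. Strict monotonicity of $\lambda$ forces all the $y_i$ to blow up simultaneously at $z=\rho$, establishing the common radius of convergence.

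For the precise type of the singularity I would perform a local expansion. Setting $z=\rho-w$ and $y_i(z)=s_i+u_i$, the system reads $u_i=\sum_j M_{ij}(\rho)u_j+Q_i(w,\mathbf{u})$, with $Q_i$ collecting all quadratic and higher-order terms. Since $\Id-M(\rho)$ is singular with one-dimensional kernel spanned by the right Perron eigenvector $\mathbf{v}$, projecting the system onto the left Perron eigenvector $\mathbf{u}$ annihilates the linear contribution and produces a scalar equation of the form $a w + b\,t^2 + \text{(higher)}=0$, where $u_i = t v_i + o(t)$. The nonlinearity (i) together with the positivity of $\mathbf{u}$ and $\mathbf{v}$ guarantees $b>0$; hypothesis (iii) rules out the degenerate possibility $a=0$ and gives $a>0$. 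Solving the quadratic yields the square-root expansion $y_i(z)=s_i-c_i\sqrt{1-z/\rho}+O(1-z/\rho)$ with $c_i>0$ for every $i$. This simultaneous, non-degenerate, square-root singular expansion is the technical heart of the proof and, I expect, the main obstacle.

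Finally, aperiodicity (vi) ensures, via another Perron--Frobenius argument on the dependency matrix, that $\rho$ is the \emph{unique} singularity of each $y_i$ on the circle $|z|=\rho$ and that the $y_i$ admit the above square-root expansion in a $\Delta$-domain around $\rho$. Applying the Flajolet--Odlyzko transfer theorem to the singular expansion then delivers
\[
  [z^n]y_i(z)\;\sim\;\frac{c_i}{2\sqrt{\pi}}\,\rho^{-n}\,n^{-3/2},
\]
so $A_i=c_i/(2\sqrt{\pi})>0$, which is the required conclusion \eqref{eq:conclusion_DLW}.
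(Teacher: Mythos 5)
The paper does not prove \cref{thm:DLW} at all: it is quoted as a known result (the Drmota--Lalley--Woods theorem), with the singular structure taken from \cite[Theorem 2.33]{DrmotaRandomTrees} and the coefficient asymptotics \eqref{eq:conclusion_DLW} then obtained via the transfer theorem \cite[Theorem VI.4]{flajolet2009analytic}. Your sketch is therefore not an alternative to anything written in the paper; it is a reconstruction of the standard proof from exactly those references, and as a strategy it is the right one: unique formal solution from positivity and $\Phi_i(0;y_1,\dots,y_d)=0$, analytic continuation until $\Id-M(z)$ degenerates (with hypothesis (v) guaranteeing this happens inside the joint convergence domain), Perron--Frobenius for the irreducible nonnegative Jacobian supplied by strong connectivity, a local Weierstrass-preparation analysis giving the simultaneous square-root expansion $y_i(z)=s_i-c_i\sqrt{1-z/\rho}+O(1-z/\rho)$ with $c_i>0$, aperiodicity to exclude other singularities on $|z|=\rho$, and transfer, giving $A_i=c_i/(2\sqrt{\pi})>0$.

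Two points you should tighten if this were to be written out in full. First, the $y_i$ do not ``blow up'' at $\rho$: they stay finite there ($y_i(\rho)=s_i$); what strong connectivity actually yields is that a square-root singularity in one coordinate propagates to all coordinates, which is how one gets the common radius of convergence. Second, $\det(\Id-M(\rho))=0$ only says that $1$ is \emph{some} eigenvalue of $M(\rho)$; identifying it with the Perron eigenvalue, identifying $(r,s_1,\dots,s_d)$ from hypothesis (v) with $(\rho, y_1(\rho),\dots,y_d(\rho))$, and ruling out that continuation fails by exiting the convergence region of the $\Phi_i$ before the Jacobian degenerates, all require the monotonicity/positivity arguments you only allude to --- this, together with the reduction to a single scalar square-root equation, is where the cited proof does its real work. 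So your outline is faithful to the proof the paper relies on, but it remains a sketch of \cite[Theorem 2.33]{DrmotaRandomTrees} rather than a self-contained argument, which is acceptable here precisely because the paper itself treats the proposition as a citation.
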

Some bibliographic comments are in order.
The name Drmota--Lalley--Woods theorem is given in \cite{flajolet2009analytic}
(see Theorem VII.6 there),
but this reference only treats the case of polynomial systems of equations.
For the more general case of analytic equations, which we will need in this paper,
we refer to \cite[Theorem 2.33]{DrmotaRandomTrees}.
Note that we only consider a special case of \cite[Theorem 2.33]{DrmotaRandomTrees},
where we do not consider any auxiliary formal variables $u_i$.
Also, this reference gives a singular expansion
of the generating series $y_i$ around the singularity
and ensures (in the aperiodic case, as assumed above) the analyticity of $y_i$
on a $\Delta$-domain, so we need to apply the so-called transfer theorem
\cite[Theorem VI.4]{flajolet2009analytic} to get \eqref{eq:conclusion_DLW} as above.
The fact that $A_i \ne 0$ is a consequence of the property $h_j(x,\bm u)\ne 0$
given in \cite[Theorem 2.33]{DrmotaRandomTrees}.

\section{Proof of Theorem~\ref{thm:ConvLaw_Av231}}
The goal of this section is to prove Theorem~\ref{thm:ConvLaw_Av231},
in particular to prove that a logical limit law holds for the class $\mathcal C=\Av(231)$
of 231-avoiding permutations.
We use the convention that $\mathcal C$ contains the empty permutation of size $0$.

We first present a standard recursive construction of the elements of $\mathcal C$.
To this end, recall that the direct sum $\tau \oplus \pi$ of two permutations $\tau$ and $\pi$
is obtained by juxtaposing $\tau$ and $\pi$ (in one-line notation) and increasing
all values in $\pi$ by the size of $\tau$. The skew sum $\tau \ominus \pi$ is defined
similarly except that, this time, values in $\tau$ are shifted by the size of $\pi$.
For example, $12 \oplus 231 = 12453$, while   $12 \ominus 231 = 45231$.
It is well-known -- see {\em e.g.}~\cite[Chapter 4]{BonaBook} --
that a non-empty permutation $\sigma$ in $\mathcal C$ can be uniquely
decomposed as $\tau \oplus (1 \ominus \pi)$, for some (possibly empty)
 $\tau$ and $\pi$ in $\mathcal C$.
This yields the following equation for the generating series
$C(z)=1+z\, C(z)^2$, whose unique non-negative power series solution is given by $C(z)=(1-\sqrt{1-4z})/(2z)$.
Therefore, as it is well known, $[z^n]C(z)$ is the $n$-th Catalan number
and one has the asymptotics $[z^n]C(z) \sim \pi^{-1/2} \, 4^n\, n^{-3/2}$.

\subsection{Refining the combinatorial equation.} 
From now on, we fix $k \ge 2$.
The decomposition $\sigma=\tau \oplus (1 \ominus \pi)$ is {\em compatible with logical types}
in the following sense.
\begin{lemma}
\label{lem:decompo_logic}
Let $\tau$ and $\pi$ be permutations and let $t_1$ and $t_2$ be their logical types of order $k$.
Then the logical type of order $k$ of $\tau \oplus (1 \ominus \pi)$ depends only on $t_1$ and $t_2$;
we denote it by $H_k(t_1,t_2)$.
\end{lemma}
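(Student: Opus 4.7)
The plan is to appeal to the Ehrenfeucht-Fra\"{i}ss\'{e} theorem and reduce the claim to a game-theoretic statement: it suffices to prove that if $\tau\equiv_k\tau'$ and $\pi\equiv_k\pi'$ for any other pair of permutations $\tau',\pi'$, then $\sigma:=\tau\oplus(1\ominus\pi)$ and $\sigma':=\tau'\oplus(1\ominus\pi')$ satisfy $\sigma\equiv_k\sigma'$; equivalently, Duplicator wins the length-$k$ EF game played on $(\sigma,\sigma')$. The desired function $H_k(t_1,t_2)$, defined as the logical type of $\tau\oplus(1\ominus\pi)$ for any representatives $\tau\in t_1$ and $\pi\in t_2$, will then be well-defined.

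The first point I would record is structural. The permutation $\sigma$ partitions canonically into three blocks: a copy of $\tau$ (bottom-left), an isolated \emph{apex} coming from the factor $1$ in $1\ominus\pi$, and a copy of $\pi$ (middle-right); the same holds for $\sigma'$. For two elements of $\sigma$ lying in distinct blocks, the truth values of $x<_P y$ and $x<_V y$ depend only on which blocks they belong to: every element of the $\tau$-block is below-left of everything in the other two blocks, and the apex lies to the left of the $\pi$-block but is greater in value than all of it. The analogous statement holds for $\sigma'$, with \emph{exactly the same} block-vs-block order pattern.

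The main step is to assemble the winning strategies. By hypothesis, Duplicator has winning strategies $S_\tau$ and $S_\pi$ in the length-$k$ EF games on $(\tau,\tau')$ and $(\pi,\pi')$. On $(\sigma,\sigma')$, Duplicator delegates each move to the appropriate sub-strategy: if Spoiler picks an element in the $\tau$- or $\tau'$-block, Duplicator replies via $S_\tau$ (and thus in the corresponding block); if Spoiler picks in the $\pi$- or $\pi'$-block, Duplicator replies via $S_\pi$; if Spoiler picks an apex, Duplicator picks the apex on the other side. At the end, the partial map is order-preserving within each block by the winning property of $S_\tau$ and $S_\pi$, is order-preserving on the apex (trivially), and is order-preserving across blocks thanks to the structural observation, because Duplicator's reply always lies in the block matched to Spoiler's choice.

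I do not foresee a real obstacle; the only point that could trip someone up is running $S_\tau$ and $S_\pi$ in parallel inside a single game of length $k$. This is harmless: since the whole game has only $k$ rounds, neither sub-game can be played for more than $k$ rounds, which is exactly the length for which $S_\tau$ and $S_\pi$ are guaranteed to win. By the Ehrenfeucht-Fra\"{i}ss\'{e} theorem, Duplicator's victory yields $\sigma\equiv_k\sigma'$, which is the content of the lemma.
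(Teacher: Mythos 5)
Your proposal is correct and follows essentially the same route as the paper's proof: reduce the claim via the Ehrenfeucht--Fra\"{i}ss\'{e} theorem and have Duplicator combine her winning strategies on $(\tau,\tau')$ and $(\pi,\pi')$ blockwise, matching apex to apex. The structural observation about cross-block comparisons is exactly the (implicit) reason the paper's combined strategy wins, so nothing further is needed.
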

\begin{proof}
  Consider permutations $\tau_1$, $\tau_2$, $\pi_1$ and $\pi_2$
  such that  $\tau_1 \equiv_k \tau_2$ and $\pi_1 \equiv_k \pi_2$.
  We set $\sigma_1=\tau_1 \oplus (1 \ominus \pi_1)$,
  $\sigma_2=\tau_2 \oplus (1 \ominus \pi_2)$ and want to show that 
  $\sigma_1 \equiv_k \sigma_2$.

  We consider the $k$-round Ehrenfeucht-Faïssé game on the board $(\sigma_1,\sigma_2)$.
  We recall that, since $\tau_1 \equiv_k \tau_2$ and $\pi_1 \equiv_k \pi_2$,
  Duplicator has winning strategies on the boards $(\tau_1,\tau_2)$ and $(\pi_1,\pi_2)$.
  Using those, she has the following strategy on $(\sigma_1,\sigma_2)$:
  \begin{itemize}
    \item 
  if Spoiler selects the maximum of 
  $\sigma_i$ for $i=1$ or $2$ (the maximum corresponds to $1$
  in the decomposition $\sigma_i=\tau_i \oplus (1 \ominus \pi_i)$),
  then Duplicator selects the maximum on the other side;
\item if Spoiler selects an element of $\tau_1$ (resp. $\tau_2$), 
  then Duplicator selects an element of $\tau_2$ (resp. $\tau_1$),
  according to the winning strategy for the game on $(\tau_1,\tau_2)$;
\item similarly with $\pi_1$ and $\pi_2$.
  \end{itemize}
  It is easy to see that this is a winning strategy for Duplicator,
  proving that $\sigma_1 \equiv_k \sigma_2$.
\end{proof}

For $t$ a logical type in $\mathcal T_k$,
we write $C_t$ for the generating series of 231-avoiding permutations of type $t$.
We remove from $\mathcal T_k$ the logical types $t$ such that $C_t \equiv 0$.  
Since $\sum_t C_t(z)=C(z)$ and since $C(z)$ is convergent at its radius of convergence $R=1/4$,
all $C_t(z)$ are convergent for $|z| \le 1/4$ (but the radius of convergence of some of them 
might be larger than $1/4$).

There is a special element of $\mathcal T_k$, the type of the empty permutation (which is alone in its class),
denoted by $\varnothing$.
Using Lemma~\ref{lem:decompo_logic}, the equation $C(z)=1+z\, C(z)^2$ can be refined as a system
\begin{equation}
\text{for all }t \in \mathcal T_k,\quad
  C_t(z) = \delta_{t,\varnothing} + z\sum_{t_1,t_2 \in\mathcal T_k \atop H_k(t_1,t_2)=t} C_{t_1}(z)\, C_{t_2}(z)
=:F_t(z;C_u, u \in \mathcal T_k).
\label{eq:systeme_Ct}
\end{equation}

The key point in the proof of the logical limit law in Theorem~\ref{thm:ConvLaw_Av231}
consists in extracting from this system the asymptotic behavior of the
function $C_t(z)$.
Though far from being explicit, this system has noticeable properties.
Let us consider the dependency graph $G_k$ of this system of equations.
We claim that $G_k$ has a unique terminal strongly connected component.
Indeed suppose for the sake of contradiction that $u_1$ and $u_2$ are elements
of two different strongly connected components $S$ and $S'$ of $G_k$.
Then, letting $t=H(u_1,u_2)$, by construction there is an edge from $u_1$ to $t$ in $G_k$,
as well as an edge from $u_2$ to $t$.
Since $S$ (resp.~$S'$) is strongly connected, this implies that $t$ is in $S$ (resp.~in $S'$).
But $t$ cannot be simultaneously in $S$ and $S'$, whence a contradiction.

Thus $G_k$ has a unique terminal strongly connected component,
which we denote $G_k^\star$.
We let $\mathcal T_k^\star$ be the set of vertices of $G_k^\star$,
i.e.~the subset of types which are in the terminal strongly connected component.
Furthermore, we write $\mathcal T_k^\bullet=\mathcal T_k \setminus \mathcal T_k^\star$.
\medskip

Another easy-to-establish property is the following lemma, regarding aperiodicity. It will be useful when applying the Drmota-Lalley-Woods theorem in~Section~\ref{sec:asymptoticAnalysis}. 

\begin{lemma}
\label{lem:aperiodic}
All series $C_t$ for $t$ in $\mathcal T_k^\star$ are aperiodic.
\end{lemma}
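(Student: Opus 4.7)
The plan is to establish the stronger statement that for every $t \in \mathcal T_k^\star$ the set of realised sizes $S_t := \{n : [z^n] C_t(z) > 0\}$ contains all sufficiently large integers; this immediately implies aperiodicity. I proceed in two steps: (i) exhibit a specific type $t_{id}$ whose realised-size set is already cofinite in $\mathbb{Z}_{\geq 0}$, and (ii) propagate cofiniteness from $t_{id}$ to an arbitrary $t \in \mathcal T_k^\star$ along a suitable path in the dependency graph $G_k$.

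For (i), I take $t_{id}$ to be the logical type of order $k$ of the identity permutation $P_n := 12\cdots n$ for $n$ large. On $P_n$ the orders $<_P^{P_n}$ and $<_V^{P_n}$ coincide, so $P_n$ is, as a $\TOTO$-model, simply a finite linear order on $n$ points. By the classical Ehrenfeucht--Fraïssé analysis of the theory of one linear order, there is a threshold $N_k$ (for instance $N_k = 2^k - 1$) such that any two finite linear orders of sizes $\geq N_k$ are $k$-equivalent; because $<_P$ and $<_V$ agree on every identity, Duplicator's winning strategy in the one-order game automatically preserves both relations simultaneously, hence lifts to a winning strategy in the $\TOTO$ game on $(P_m, P_{m'})$. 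Therefore $P_m \equiv_k P_{m'}$ for all $m, m' \geq N_k$, yielding a single type $t_{id}$ with $S_{t_{id}} \supseteq \{n \geq N_k\}$; in particular $C_{t_{id}} \not\equiv 0$, so $t_{id} \in \mathcal T_k$.

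For (ii), recall that $\mathcal T_k^\star$ is the \emph{unique} terminal strongly connected component of $G_k$, as observed just before the lemma. Consequently, from $t_{id}$ one can reach every vertex of $\mathcal T_k^\star$: first enter $\mathcal T_k^\star$ (possible since it is the unique terminal SCC of $G_k$), then traverse it using strong connectivity. Fix $t \in \mathcal T_k^\star$ and a directed path $t_{id} = t_0 \to t_1 \to \cdots \to t_L = t$ in $G_k$. Each edge $t_i \to t_{i+1}$ is witnessed by some $u_i \in \mathcal T_k$ with $H_k(t_i, u_i) = t_{i+1}$ or $H_k(u_i, t_i) = t_{i+1}$; fix a representative $\rho_i$ of $u_i$ (it exists since null types were discarded from $\mathcal T_k$). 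Starting from any $\sigma_0$ of type $t_{id}$ and applying inductively $\sigma_{i+1} := \sigma_i \oplus (1 \ominus \rho_i)$ or $\sigma_{i+1} := \rho_i \oplus (1 \ominus \sigma_i)$ according to the case, \cref{lem:decompo_logic} yields that $\sigma_L$ is a $231$-avoiding permutation of type $t$ with size $|\sigma_0| + c$, where $c := L + \sum_{i=0}^{L-1} |\rho_i|$ depends only on the chosen path. Letting $|\sigma_0|$ range over $\{n \geq N_k\}$ gives $S_t \supseteq \{n + c : n \geq N_k\}$, which is cofinite; hence $C_t$ is aperiodic. The only non-bookkeeping ingredient is the transfer of the one-order EF-game bound to $\TOTO$ in step (i), which works purely because $<_P^{P_n} = <_V^{P_n}$; everything else is a walk in $G_k$.
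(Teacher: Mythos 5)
Your proposal is correct and takes essentially the same approach as the paper: seed the argument with the type of long increasing permutations (the paper cites propositions of Albert et al.\ for the fact that all $12\cdots n$, $n$ large, share one type, where you rederive it by noting $<_P$ and $<_V$ coincide on identities so the one-order EF bound lifts to \TOTO), then propagate along dependency-graph edges into and through the unique terminal strongly connected component. The only cosmetic difference is that you propagate cofiniteness of the realised-size set via explicit $\sigma\mapsto\sigma\oplus(1\ominus\rho)$ constructions, while the paper propagates aperiodicity through the coefficient-wise inequality $C_t \ge z^r C_u$; the two arguments are interchangeable.
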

\begin{proof}
A standard EF game argument -- see \cite[Propositions 24 and 26]{albert2020two} --
 asserts that there exists $K>0$ such that
all permutations $12 \cdots n$ for $n \ge K$ have the same logical type of order $k$, which we will denote by~$t_\nearrow$. Clearly, $C_{t_\nearrow}$ is aperiodic.

Furthermore, it is easy to see that aperiodicity propagates along edges of the dependency graph.
More formally, we claim that
if $C_u$ is aperiodic for some type $u$,
and if $\frac{\partial F_t}{\partial C_u} \ne 0$, then $C_t$ is aperiodic.
Indeed, in this case, $F_t$ contains a monomial $z C_u C_v$ for some $v$.
Since $C_v \neq 0$, there exists $r$ such that $C_t \ge z^r C_u$ coefficient-wise,
implying that $C_t$ is aperiodic.

Starting from $C_{t_\nearrow}$, we can follow outgoing edges of the dependency graph
 until we reach a state $t_0$ in the terminal
strongly connected component. Then $C_{t_0}$ is aperiodic.
Using again the propagation of aperiodicity along edges,
we conclude all series $C_t$, for $t \in \mathcal T_k^\star$ are aperiodic. 
\end{proof}

\subsection{The Jacobian matrix and its spectral radius.}
We consider the Jacobian matrix of the system:
\[M_k(z) = \left( \frac{\partial F_t}{\partial C_u}(z;C_v(z),v \in \mathcal T_k) \right)_{t,u \in \mathcal T_k}.\]
Its rows and columns are indexed by elements of $\mathcal T_k$.
To write down the matrix, we order $\mathcal T_k$ such that the elements of $\mathcal T_k^\star$ come first.

Let $t$ and $u$ be in $\mathcal T_k^\bullet$ and $\mathcal T_k^\star$ respectively.
Since $u$ is in the  terminal component of $G_k$, while $t$ is not, there cannot be an edge from $u$ to $t$.
Hence, by construction, one has 
$ \frac{\partial F_t}{\partial C_u}=0$.
This implies that the matrix $M_k$ then decomposes into blocks as
\begin{equation}
  M_k= \begin{pmatrix}
  M_k^\star & *\\
  0 & M_k^\bullet
  \label{eq:BlockDecompoM}
\end{pmatrix},\end{equation}
where $M_k^\star$ and $M_k^\bullet$ are the Jacobian matrices restricted to $\mathcal T_k^\star$
and $\mathcal T_k^\bullet$ respectively, $0$ is the zero matrix, and $*$ denotes an unknown matrix.
\medskip

For a square matrix $A$ we denote $\SR(A)$ its spectral radius, i.e.~the maximum modulus
of an eigenvalue of $A$.
The following lemma will be useful in Section~\ref{sec:asymptoticAnalysis} for  finding the radii of convergence of the series that are solutions of our system \eqref{eq:systeme_Ct}. 
\begin{lemma}
  \label{lem:SpectralRadius_MBullet}
  We have $\SR\big(M_k^\bullet(1/4)\big)<\SR\big(M^\star_k(1/4)\big)=\SR\big(M_k(1/4)\big)=1$.
\end{lemma}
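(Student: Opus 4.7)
The plan is to exploit a simple identity on the Jacobian $M_k(z)$: all its column sums are equal to $2zC(z)$, and hence equal $1$ at $z=1/4$. Differentiating $F_t=\delta_{t,\varnothing}+z\sum_{(t_1,t_2)\,:\,H_k(t_1,t_2)=t} C_{t_1}C_{t_2}$ gives
\[
\frac{\partial F_t}{\partial C_u}(z;C_v,v\in\mathcal T_k) \;=\; z\sum_{v\,:\,H_k(u,v)=t}C_v(z) \;+\; z\sum_{v\,:\,H_k(v,u)=t}C_v(z),
\]
and summing this over $t\in\mathcal T_k$ collapses each of the two double sums to $z\sum_v C_v(z)=zC(z)$, since for each pair $(u,v)$ there is exactly one $t$ with $H_k(u,v)=t$. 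Hence the $u$-th column sum of $M_k(z)$ is $2zC(z)$, which at $z=1/4$ equals $2\cdot\frac{1}{4}\cdot C(1/4)=1$; equivalently $\mathbf{1}^T M_k(1/4)=\mathbf{1}^T$.

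Combining this identity with the block structure \eqref{eq:BlockDecompoM}, the equality $\SR(M_k^\star(1/4))=1$ follows immediately. Indeed, the zero block in \eqref{eq:BlockDecompoM} forces $(M_k(1/4))_{t,u}=0$ whenever $t\in\mathcal T_k^\bullet$ and $u\in\mathcal T_k^\star$, so for every $u\in\mathcal T_k^\star$ the $u$-th column of $M_k^\star(1/4)$ still sums to $1$. Thus $\mathbf{1}_\star$ is a strictly positive left eigenvector of the non-negative matrix $M_k^\star(1/4)$ for the eigenvalue $1$, and the Collatz-Wielandt characterization of the spectral radius yields $\SR(M_k^\star(1/4))=1$. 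The same block triangular form of \eqref{eq:BlockDecompoM} also gives $\SR(M_k(1/4))=\max\bigl(\SR(M_k^\star(1/4)),\SR(M_k^\bullet(1/4))\bigr)$, so everything reduces to proving the strict inequality $\SR(M_k^\bullet(1/4))<1$.

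For this last step, I would decompose $\mathcal T_k^\bullet$ into its strongly connected components $B_1,\dots,B_m$; each $B_i$ is non-terminal in the full dependency graph, because $\mathcal T_k^\star$ is its unique terminal SCC. A topological reordering of the $B_i$'s makes $M_k^\bullet$ block triangular with diagonal blocks $M_{B_i}(z)$, so it suffices to show $\SR(M_{B_i}(1/4))<1$ for each $i$. Non-terminality of $B_i$ provides some $u_0\in B_i$ with an edge to some $t_0\notin B_i$, i.e.~$\partial F_{t_0}/\partial C_{u_0}\neq 0$; since this partial derivative is a non-zero polynomial with non-negative coefficients in $z$ and the $C_v$'s, and $C_v(1/4)>0$ for every (non-removed) $v\in\mathcal T_k$, one has $(M_k(1/4))_{t_0,u_0}>0$. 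Consequently the $u_0$-th column of $M_{B_i}(1/4)$ sums to strictly less than $1$, while all other columns sum to at most $1$. Since $B_i$ is strongly connected, $M_{B_i}(1/4)$ is non-negative and irreducible, so Perron-Frobenius supplies a strictly positive right eigenvector $v$ for the spectral radius $\rho_i$. Multiplying $M_{B_i}(1/4)v=\rho_i v$ on the left by $\mathbf{1}_{B_i}^T$ gives $\sum_u c_u v_u=\rho_i\sum_u v_u$, where the $c_u$ are the column sums of $M_{B_i}(1/4)$; the positivity of $v$ together with $c_{u_0}<1$ and $c_u\le 1$ elsewhere then forces $\rho_i<1$. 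The delicate point of the argument is exactly here: promoting a single strict inequality among column sums to a strict inequality on the spectral radius rests on the strict positivity of the Perron eigenvector, which in turn relies on the irreducibility of each block $M_{B_i}(1/4)$ --- the only place where the SCC decomposition of the dependency graph is genuinely used.
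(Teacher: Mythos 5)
Your proof is correct, and its first half (the column-sum identity $\mathbf{1}^T M_k(1/4)=\mathbf{1}^T$, hence $\SR(M_k(1/4))=\SR(M_k^\star(1/4))=1$ via the zero lower-left block and the max-column-sum bound) coincides with the paper's argument. Where you genuinely diverge is the strict inequality $\SR(M_k^\bullet(1/4))<1$: you only exhibit \emph{one} deficient column per strongly connected component of $\mathcal T_k^\bullet$, and then need the SCC decomposition of $M_k^\bullet$, irreducibility of each diagonal block, and the strict positivity of the Perron eigenvector to promote that single strict column inequality to a strict bound on the spectral radius. The paper avoids all of this with a sharper combinatorial observation: fixing one $u^\star\in\mathcal T_k^\star$, \emph{every} $u\in\mathcal T_k^\bullet$ has an edge to $t_u=H_k(u,u^\star)$, which lies in $\mathcal T_k^\star$ because $u^\star$ is in the terminal component; hence every column of $M_k(1/4)$ indexed by $u\in\mathcal T_k^\bullet$ has a positive entry outside the $\bullet$ block, so \emph{all} column sums of $M_k^\bullet(1/4)$ are strictly below $1$, and $\SR(M_k^\bullet(1/4))<1$ follows from the elementary column-sum bound alone, with no Perron--Frobenius theory. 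Your route buys generality (it would work whenever each non-terminal SCC merely has an outgoing edge, without the closure-under-$H_k$ structure), at the price of heavier machinery; note also the small edge case you should acknowledge: a trivial SCC consisting of a single vertex with no self-loop gives a $1\times 1$ zero block, where the irreducible Perron--Frobenius theorem does not literally apply, though of course its spectral radius is $0<1$ trivially.
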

\begin{proof}
Consider the column sums of $M_k(z)$, where $|z| \le 1/4$: for $u$ in $\mathcal T_k$, we have
\[\sum_{t \in \mathcal T_k} M_k(z)_{t,u} = \frac{\partial \left( \sum_{t \in \mathcal T_k} F_t \right)}
    {\partial C_u}(z;C_v(z),v \in \mathcal T_k). \]
But by construction
\[\sum_{t \in \mathcal T_k} F_t(z;C_v,v \in \mathcal T_k) = 1 + z \, \left(\sum_{v \in \mathcal T_k} C_v \right)^2,\]
so that we get
\[\sum_{t \in \mathcal T_k} M_k(z)_{t,u} = 2 z\,\left(\sum_{v \in \mathcal T_k} C_v(z) \right) = 2 z \, C(z). \]
The right-hand side does not depend on $u$, i.e.~$M_k(z)$ has constant column sums. 
For $z=1/4$, this sum is $2 \cdot (1/4) \cdot C(1/4)=1$.
Thus $1$ is an eigenvalue of $M_k(1/4)$, proving $\SR(M_k(1/4)) \ge 1$.
On the other hand, the spectral radius of a nonnegative matrix is at most its maximal column sum
-- see, {\em e.g.}, \cite[Lemma 4.4]{woods1997coloring} --,
proving $\SR(M_k(1/4)) =1$.

The same argument, together with the fact that the lower left block of $M_k$
is filled with zeroes, proves that $\SR(M^\star_k(1/4)) =1$.

We now consider $M_k^\bullet(1/4)$, which appears as a block in $M_k(1/4)$ (see decomposition \eqref{eq:BlockDecompoM}).
Fix an arbitrary element $u^\star$ in $\mathcal T_k^\star$.
For each $u$ in $\mathcal T_k^\bullet$, we set $t_u=H_k(u,u^\star)$.
Note that $t_u \in \mathcal T_k^\star$. 
By construction, we have $M_k(1/4)_{t_u,u}>0$.
This implies that the column indexed by $u$ in $M_k(1/4)$ has a nonzero element outside
the block $M_k^\bullet(1/4)$.
Consequently, the column sums of $M_k^\bullet(1/4)$ are smaller than those of $M_k(1/4)$,
i.e., for $u$ in $\mathcal T_k^\bullet$, we have
\[\sum_{t \in \mathcal T_k^\bullet} M_k^\bullet(1/4)_{t,u} < \sum_{t \in \mathcal T_k} M_k(1/4)_{t,u}=1.\]
Using that the spectral radius is at most the maximal column sum, we get
$\SR(M_k^\bullet(1/4)) <1$.
\end{proof}

\subsection{Radius of convergence and asymptotic analysis.}\label{sec:asymptoticAnalysis}

\begin{lemma}
\label{lem:RCv_Bullet}
  For $t$ in $\mathcal T_k^\bullet$, $C_t$ has radius of convergence strictly larger than $1/4$.
  Consequently, there exists $\kappa_t<4$ such that $[z^n] C_t(z) =O(\kappa_t^n)$.
\end{lemma}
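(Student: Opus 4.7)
The plan is to apply the analytic implicit function theorem to the subsystem indexed by $\mathcal T_k^\bullet$, using the spectral bound established in the previous lemma. The key preliminary observation is that this subsystem is \emph{self-contained}: by terminality of $\mathcal T_k^\star$, there is no edge in the dependency graph $G_k$ leaving $\mathcal T_k^\star$, so for every $t \in \mathcal T_k^\bullet$ and every $u \in \mathcal T_k^\star$ one has $\frac{\partial F_t}{\partial C_u}=0$, i.e.~$F_t$ only involves the unknowns $C_v$ for $v \in \mathcal T_k^\bullet$. This is precisely the vanishing of the lower-left block in the decomposition \eqref{eq:BlockDecompoM}, but read horizontally: the equations for $t \in \mathcal T_k^\bullet$ form a closed system
\[
C_t(z) = F_t^\bullet\bigl(z;\, C_u(z), u \in \mathcal T_k^\bullet\bigr), \qquad t \in \mathcal T_k^\bullet,
\]
whose Jacobian with respect to the vector $(C_u)_{u \in \mathcal T_k^\bullet}$ is exactly $M_k^\bullet(z)$.

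Next, I would check that the values $C_t(1/4)$ are finite for all $t \in \mathcal T_k^\bullet$. This follows from nonnegativity of coefficients together with the fact that $\sum_{t \in \mathcal T_k} C_t(1/4) = C(1/4) = 2 < \infty$, and that each $F_t^\bullet$ has nonnegative coefficients so all terms in its convergent expansion are finite at $z=1/4$. Moreover, each $F_t^\bullet$ is a polynomial in the $C_u$, hence analytic in a neighbourhood of $\bigl(1/4,(C_u(1/4))_{u\in\mathcal T_k^\bullet}\bigr)$.

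The heart of the proof is then the following. Consider the map
\[
\Phi(z, \bm{y}) = \bm{y} - \bigl(F_t^\bullet(z;\bm{y})\bigr)_{t \in \mathcal T_k^\bullet},
\]
which vanishes at the point $\bigl(1/4,(C_u(1/4))_{u\in\mathcal T_k^\bullet}\bigr)$. Its Jacobian with respect to $\bm{y}$ at that point is $I - M_k^\bullet(1/4)$. By \cref{lem:SpectralRadius_MBullet}, every eigenvalue of $M_k^\bullet(1/4)$ has modulus strictly less than $1$, so every eigenvalue of $I - M_k^\bullet(1/4)$ is nonzero, and this Jacobian is invertible. The analytic implicit function theorem then yields a unique analytic solution $\widetilde{\bm{y}}(z)$ of $\Phi(z,\widetilde{\bm y}(z))=0$ in a complex neighbourhood of $z=1/4$, which by uniqueness must coincide with $(C_u(z))_{u\in\mathcal T_k^\bullet}$ on a real interval to the left of $1/4$. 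Hence each $C_t$ with $t \in \mathcal T_k^\bullet$ extends analytically past $z=1/4$, so its radius of convergence $\rho_t$ satisfies $\rho_t > 1/4$.

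The final claim is then routine: fixing any $\rho_t'$ with $1/4 < \rho_t' < \rho_t$, Cauchy's inequalities applied on the disk of radius $\rho_t'$ give $[z^n]C_t(z) = O\bigl((\rho_t')^{-n}\bigr)$, so we may take $\kappa_t = 1/\rho_t' < 4$. I do not anticipate any real obstacle here: once one notices the self-contained nature of the $\bullet$-subsystem, the proof is a direct application of the analytic implicit function theorem combined with \cref{lem:SpectralRadius_MBullet}.
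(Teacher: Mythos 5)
Your argument is correct and is essentially the paper's own proof: isolate the self-contained subsystem indexed by $\mathcal T_k^\bullet$ (using terminality of $\mathcal T_k^\star$), note that $\Id-M_k^\bullet(1/4)$ is invertible by \cref{lem:SpectralRadius_MBullet}, and apply the analytic implicit function theorem at $\bigl(1/4,(C_u(1/4))_{u\in\mathcal T_k^\bullet}\bigr)$ to continue each $C_t$ analytically at $z=1/4$, then conclude by Cauchy bounds. The only step you pass over silently is the deduction ``analytic at $z=1/4$ implies radius of convergence $>1/4$'': a priori $C_t$ could have a singularity elsewhere on the circle $|z|=1/4$, and one should invoke Pringsheim's theorem (the coefficients are nonnegative, so the radius of convergence itself is a singular point on the positive real axis), which is exactly how the paper closes this small gap.
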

\begin{proof}
  Let $t$ be in $\mathcal T_k^\bullet$
  and consider the equation $C_t(z) =:F_t(z;C_u, u \in \mathcal T_k)$ in the system \eqref{eq:systeme_Ct}.
  As explained above (see the text above Eq.~\eqref{eq:BlockDecompoM}), this equation only involves
  series $C_u$, for $u$ in $\mathcal T_k^\bullet$ (and not those for $u$ in $\mathcal T_k^\star$).

 We can therefore consider the restriction of the system \eqref{eq:systeme_Ct} to
the variables $(C_t, t \in \mathcal T_k^\bullet)$:
\begin{equation}
 \text{for all }t \in \mathcal T_k^\bullet, \quad  C_t(z) 
=F_t(z;C_u, u \in \mathcal T_k^\bullet).
\label{eq:systeme_Ct_bullet}
\end{equation}
The Jacobian matrix of this system at $z=1/4$ is $M_k^\bullet(1/4)$.
From Lemma~\ref{lem:SpectralRadius_MBullet}, the matrix $(\Id-M_k^\bullet(1/4))$ is invertible.
Therefore, using the multivariate implicit function theorem
-- see \cite[Lemma 5.1]{woods1997coloring} or \cite[Theorem B.6]{flajolet2009analytic} --,
 Eq.~\eqref{eq:systeme_Ct_bullet} has a unique solution for $z$ in a neighbourhood $V$ of $1/4$,
 and this solution defines analytic functions $C_t$ for $t$ in $\mathcal T_k^\bullet$.

We recall that $C_t(z)$ is analytic for $|z| < 1/4$ for all $t$ in $\mathcal T_k$,
since $C_t$ is dominated coefficient-wise by the Catalan series $C(z)$.
The above result means that for  $t$ in $\mathcal T_k^\bullet$,
there is an analytic extension of $C_t$ in a neighbourhood of $1/4$,
i.e.~$1/4$ is not a singularity of $C_t$.
Since $C_t$ has nonnegative coefficients,
Pringsheim's theorem applies \cite[Theorem IV.5]{flajolet2009analytic},
and we conclude that $C_t$ has a radius of convergence larger than 1/4 
(for $t$ in $\mathcal T_k^\bullet$).

The consequence on the growth of the coefficients $[z^n] C_t(z)$
is standard; see, {\em e.g.}, \cite[Theorem IV.7]{flajolet2009analytic}.
\end{proof}

\begin{lemma}
\label{lem:asymp_Ctn}
  For $t$ in $\mathcal T_k^\star$, there exists a constant $A_t>0$ such that, as $n$ tends to $+\infty$, one has
  \begin{equation}
   \label{eq:asymp_Ctn}
   [z^n] C_t(z) \sim A_t \, 4^n\, n^{-3/2}.
     \end{equation}
\end{lemma}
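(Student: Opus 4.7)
The plan is to apply the Drmota--Lalley--Woods theorem (\cref{thm:DLW}) to the subsystem of \eqref{eq:systeme_Ct} indexed by types in $\mathcal T_k^\star$. The block shape \eqref{eq:BlockDecompoM} shows that each equation $C_t = F_t(z;\vec C)$ with $t \in \mathcal T_k^\star$ involves only variables $C_u$ with $u \in \mathcal T_k^\star$ together with some $C_u$'s for $u \in \mathcal T_k^\bullet$, and \cref{lem:RCv_Bullet} provides analytic extensions of the latter in a neighbourhood of $z=1/4$. I would therefore substitute these analytic $C_u$ ($u\in \mathcal T_k^\bullet$) into the equations for $t\in\mathcal T_k^\star$ to obtain a reduced system
\begin{equation*}
  C_t(z) = F_t^\star\bigl(z;(C_u(z))_{u \in \mathcal T_k^\star}\bigr), \qquad t \in \mathcal T_k^\star,
\end{equation*}
whose right-hand sides have non-negative coefficients and are jointly analytic in $z$ near $1/4$ and in the $y_u$'s near $0$.

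The bulk of the work is then the verification of the hypotheses of \cref{thm:DLW} for this reduced system. \emph{Nonlinearity}: for any $t_0 \in \mathcal T_k^\star$, the type $t := H_k(t_0,t_0)$ again lies in $\mathcal T_k^\star$ by terminality, so the equation for $C_t$ contains the monomial $z\,y_{t_0}^2$. \emph{Vanishing at the origin}: since $H_k(t_1,t_2)$ is never the empty type, $\varnothing$ has no incoming edge in $G_k$ and hence $\varnothing \in \mathcal T_k^\bullet$; consequently the $\delta_{t,\varnothing}$ term in \eqref{eq:systeme_Ct} is absent for $t\in\mathcal T_k^\star$ and every summand of $F_t^\star$ carries a factor of $z$. \emph{Non-triviality (iii)}: I would pick $\sigma$ of minimum size whose type $t$ lies in $\mathcal T_k^\star$; in its decomposition $\sigma = \tau\oplus(1\ominus\pi)$ both $\tau$ and $\pi$ are strictly shorter than $\sigma$, hence of types in $\mathcal T_k^\bullet$, giving a non-zero contribution to $F_t^\star(z;0,\ldots,0)$. \emph{Strong connectedness}: the dependency graph of the reduced system coincides with the subgraph of $G_k$ induced on $\mathcal T_k^\star$, which is strongly connected by the very definition of the terminal SCC. \emph{Characteristic system}: the tuple $\bigl(1/4,(C_t(1/4))_{t\in\mathcal T_k^\star}\bigr)$ lies in the common region of convergence and satisfies the system equations by construction, while the determinant equation holds because \cref{lem:SpectralRadius_MBullet} gives $\SR(M_k^\star(1/4))=1$, forcing $\det(\Id-M_k^\star(1/4))=0$. \emph{Aperiodicity} is provided by \cref{lem:aperiodic}.

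Granted these hypotheses, \cref{thm:DLW} yields that the family $(C_t)_{t\in\mathcal T_k^\star}$ shares the common radius of convergence $1/4$ and admits the asymptotic $[z^n]C_t(z) \sim A_t\cdot 4^n\cdot n^{-3/2}$ with $A_t \neq 0$; positivity $A_t>0$ then follows from the non-negativity of the coefficients of $C_t$. I expect the main (though mild) obstacle to be the verification of non-triviality (hypothesis (iii)), which is the only step that genuinely uses the combinatorics of $\oplus$-decompositions rather than the structural properties already established in the preceding lemmas.
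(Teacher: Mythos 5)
Your proposal follows essentially the same route as the paper's proof: substitute the analytically-continued series $C_u$ ($u\in\mathcal T_k^\bullet$) from \cref{lem:RCv_Bullet} into the equations indexed by $\mathcal T_k^\star$, then apply the Drmota--Lalley--Woods theorem, using \cref{lem:aperiodic} for aperiodicity and \cref{lem:SpectralRadius_MBullet} (via Perron--Frobenius, since $\SR(M_k^\star(1/4))=1$ for a nonnegative matrix forces $\det(\Id-M_k^\star(1/4))=0$) for the characteristic system. Your explicit verifications of conditions (i)--(iv), which the paper only declares ``easy to check,'' are correct, so the argument is sound as written.
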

\begin{proof}
We consider the system \eqref{eq:systeme_Ct} as a system of equations for the series
$(C_u; u \in \mathcal T_k^\star)$, seeing the series $(C_v; v \in \mathcal T_k^\bullet)$ as parameters.
More formally for $t \in \mathcal T_k^\star$, we let $\Phi_t(z; C_u, u \in \mathcal T_k^\star)$
be the function $F_t(z; C_u, u \in \mathcal T_k)$ where the indeterminates 
$(C_u, u \in \mathcal T_k^\star)$ are untouched while
the $(C_v, v \in \mathcal T_k^\bullet)$ are substituted by their actual value $C_v(z)$.
Then $(C_u(z); u \in \mathcal T_k^\star)$ are the unique formal power series solutions of the system
\begin{equation}
\label{eq:systeme_Ct_critique}
\text{for all }t\in  \mathcal T_k^\star, \quad C_t=\Phi_t(z; C_u, u \in \mathcal T_k^\star).
\end{equation}

The dependency graph of this system is strongly connected,
and we will apply the Drmota-Lalley-Woods theorem\footnote{We note that, even though the original system
\eqref{eq:systeme_Ct} is polynomial, the restricted system \eqref{eq:systeme_Ct_critique}
after substitution of the $(C_u)_{u \in \mathcal T_k^\bullet}$, is not polynomial any more
since some of the $C_u$ might be infinite series.
This is the reason why we need the general version of Drmota--Lalley--Woods theorem with analytic equations,
and not only the one for polynomial systems presented in \cite{flajolet2009analytic}.}
recalled in Section~\ref{sec:DLW}.
Conditions 1, 2, 3 and 4 of Proposition~\ref{thm:DLW} are easy to check.
Condition 6 follows from Lemma~\ref{lem:aperiodic}.
It remains to check condition 5.

The functions $\Phi_t$ are power series with nonnegative integer coefficients
and are analytic on the region
\[ \{|z| <\rho_2, (C_u) \in \mathbb R_+^{\mathcal T_k^\star}\},\]
where $\rho_2$ is the minimal radius of convergence of a series $C_v(z)$ with $v \in \mathcal T_k^\bullet$ (recall that $\Phi_t$ now depends on $z$ through the substituted series
$C_v(z)$ with $v \in \mathcal T_k^\bullet$).
From Lemma~\ref{lem:RCv_Bullet}, we have $\rho_2>1/4$.
We recall that all series $(C_u)_{u \in \mathcal T_k^\star}$ are convergent at $z=1/4$
(since they are coefficient-wise dominated by $C$).
The point $(1/4,(C_u(1/4))_{u \in \mathcal T_k^\star})$ therefore lies in the analyticity
region of the functions $\Phi_t$.
We claim that $(1/4,(C_u(1/4))_{u \in \mathcal T_k^\star})$ is a solution of the system
\eqref{eq:characteristic_system}.
That it satisfies $s_i=\Phi_i(r,s_1,\cdots,s_d)$ is clear,
since the $(C_u)_{u \in \mathcal T_k^\star}$ satisfy \eqref{eq:systeme_Ct_critique}.
Besides, the equality 
\[      0=\det\left(\delta_{i,j} - \frac{\partial \Phi_i}{\partial y_j}(r,s_1,\cdots,s_d)\right)_{1 \le i,j \le d}\]
is implied by Lemma~\ref{lem:SpectralRadius_MBullet} (the Jacobian matrix of the restricted
system is $M_k^\star$).
We conclude that condition 5 of Proposition~\ref{thm:DLW} is satisfied as well.

Therefore Proposition~\ref{thm:DLW} applies. All series $(C_u(z))_{u \in \mathcal T_k^\star}$
have the same radius of convergence $\rho=1/4$
and \eqref{eq:asymp_Ctn} holds.
\end{proof}

\subsection{Concluding the proof of the logical limit law}

Let $\Psi$ be a first-order sentence on permutations,
and denote by $k$ its quantifier depth.
 Then there exists a subset $\mathcal T_{\Psi}$ of $\mathcal T_k$ such that
 \[ \{ \sigma \in \mathcal C: \sigma \models \Psi\} = \biguplus_{t \in \mathcal T_{\Psi}} \mathcal C_t.\]
 This implies, for $n \ge 1$ and $ \sigma_n$ a uniform random 231-avoiding permutation of size $n$,
 \[\PP( \sigma_n \models \Psi) = \sum_{t \in \mathcal T_{\Psi}} \frac{[z^n] C_t(z)}{[z^n] C(z)}.\]
Since the set $\mathcal T_k$ of $k$-logical types of permutations is finite, the above sum is finite.
The existence of a limit then follows from Lemmas~\ref{lem:RCv_Bullet} and~\ref{lem:asymp_Ctn},
recalling that $[z^n] C(z)\sim\pi^{-1/2}\, 4^n\, n^{-3/2}$.
This proves the logical limit law.

Item 1 in Theorem~\ref{thm:ConvLaw_Av231} also follows immediately.
 If $\mathcal T_{\Psi}$ contains at least one type in $\mathcal T_k^\star$,
 then the limit of $\PP( \sigma_n \models \Psi)$ is positive.
 On the other hand, if $\mathcal T_{\Psi} \cap \mathcal T_k^\star=\emptyset$, 
 then $\PP( \sigma_n \models \Psi)$ decreases exponentially fast to $0$ by Lemma~\ref{lem:RCv_Bullet}.
 
\subsection{Set of limiting probabilities}
We consider here the set of limiting probabilities
\[L:=\big\{\lim_{n \to +\infty} \PP(\sigma_n \models \Psi), \Psi\text{\ FO sentence} \big\},\]
and we want to prove that it is dense in $[0,1]$,
which is the remaining part of Theorem~\ref{thm:ConvLaw_Av231}.

We start by recalling a result of Kakeya on the set of subsums of a convergent series,
see \cite{KakeyaSubsums} for the original statement and \cite{KakeyaProof} for a complete proof. We only copy here a part of the theorem, which is the one relevant for us.
\begin{lemma}
\label{lem:Kakeya}
Let $(p_i)_{i \ge 0}$ be a non-increasing sequence of positive real numbers
such that $\sum_{i\ge 0} p_i<+\infty$.
Assume that for all $i \ge 0$, one has $p_i \le \sum_{j>i} p_j$.
Then 
\[\left\{\sum_{i \ge 0} \eps_i p_i; (\eps_i) \in \{0,1\}^{\mathbb N}\right\}
    = \left[0,\sum_{i \ge 0} p_i \right],\]
    i.e.~the set of (finite and infinite) subsums of $\sum_{i\ge 0} p_i$ is the whole
    interval $\left[0,\sum_{i \ge 0} p_i \right]$.
\end{lemma}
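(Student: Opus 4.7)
The plan is to prove Kakeya's lemma by a greedy-algorithm argument. The inclusion $\{\sum_i \eps_i p_i : (\eps_i) \in \{0,1\}^{\mathbb N}\} \subseteq [0, S]$, where $S := \sum_{i \ge 0} p_i$, is trivial from $p_i \ge 0$ and convergence. For the reverse inclusion, given an arbitrary $x \in [0,S]$, the goal is to construct $(\eps_i)$ explicitly.

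I would set $r_0 := x$ and define recursively, for $i \ge 0$,
\[
\eps_i := \begin{cases} 1 & \text{if } r_i \ge p_i, \\ 0 & \text{otherwise}, \end{cases}
\qquad r_{i+1} := r_i - \eps_i p_i.
\]
Writing $T_i := \sum_{j \ge i} p_j$ (so $T_0 = S$ and $T_i \to 0$ since the series converges), the main claim to prove, by induction on $i$, is the invariant $0 \le r_i \le T_i$. Once it is established, $r_i \to 0$ forces the partial sums $\sum_{j < i} \eps_j p_j = x - r_i$ to converge to $x$, i.e.\ $\sum_j \eps_j p_j = x$, as required.

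The base case $i=0$ is the assumption $x \in [0, S]$. For the inductive step, the case $\eps_i = 1$ is routine: by definition $r_i \ge p_i$, so $r_{i+1} = r_i - p_i \ge 0$, and $r_i \le T_i$ gives $r_{i+1} \le T_i - p_i = T_{i+1}$. The only delicate point, which I expect to be the main obstacle and is the place where the lemma's hypothesis is actually used, is the case $\eps_i = 0$: here $r_{i+1} = r_i$, so the lower bound is free, and one needs $r_i \le T_{i+1} = T_i - p_i$. The greedy rule only gives $r_i < p_i$, so I would combine this with the standing assumption $p_i \le \sum_{j>i} p_j = T_{i+1}$ to conclude $r_i < p_i \le T_{i+1}$, closing the induction.

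This is the entire content of the argument. Monotonicity of $(p_i)$ is not explicitly invoked in the induction itself, but the crucial inequality $p_i \le T_{i+1}$ cannot be dropped: without it, the set of subsums typically degenerates to a Cantor-like set of measure zero in $[0,S]$, as in the geometric example $p_i = 3^{-i}$. This intuition also explains why the hypothesis is \emph{necessary} and not just sufficient, and justifies why the only place the greedy construction can fail (the ``skip'' step $\eps_i = 0$) is exactly the place where the hypothesis is consumed.
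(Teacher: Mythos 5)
Your greedy argument is correct and complete: the invariant $0 \le r_i \le T_i$ is exactly the right thing to propagate, the case analysis is sound, and the only use of the hypothesis $p_i \le \sum_{j>i} p_j$ occurs, as you say, in the skip step $\eps_i = 0$, where $r_i < p_i \le T_{i+1}$ closes the induction; since $T_i \to 0$, the partial sums $x - r_i$ converge to $x$. It is worth noting that the paper itself gives no proof of this lemma: it is quoted from Kakeya's original article and a reference supplying a complete proof, so your write-up is not an alternative to an argument in the paper but a self-contained substitute for an external citation. Your proof is the standard one for this statement, and your observation that monotonicity of $(p_i)$ is never invoked is accurate --- the tail-dominance condition alone drives the construction (monotonicity matters only in the converse direction of Kakeya's theorem, where failure of the condition at some index produces a gap in the set of subsums, as in your $p_i = 3^{-i}$ example); just be careful to present that converse discussion as motivation rather than as part of the proof, since the lemma as stated only asserts the equality under the given hypotheses.
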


 In the following, we use the notation $\Cat_n=\frac{1}{n+1}\binom{2n}n$ for the $n$-th
Catalan number. Let $ \sigma_n$ be
  a uniform random $231$-avoiding permutation of size $n$.
We can decompose $ \sigma_n$ as $ \sigma_n= \tau_n \oplus(1\ominus  \pi_n)$. Note that $ \tau_n$ and $ \pi_n$ are random $231$-avoiding permutations, and that their sizes themselves are random.
 Their asymptotic distribution is given as follows.
 \begin{lemma}
 \label{lem:Asymptotique_Tau}
 Fix a $231$-avoiding permutation $\rho$. Then we have
 \[ \lim_{n \to +\infty} \PP( \tau_n=\rho)=\lim_{n \to +\infty} \PP( \pi_n=\rho) = 4^{-|\rho|-1}.\]
 \end{lemma}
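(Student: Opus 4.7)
The plan is to reduce the statement to an elementary ratio of Catalan numbers by exploiting the unique decomposition $\sigma = \tau \oplus (1 \ominus \pi)$ of a non-empty $231$-avoiding permutation $\sigma$.

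First I would use the fact, recalled at the beginning of the section, that this decomposition sets up a bijection between $\Av_n(231)$ (with $n\geq 1$) and
\[ \bigsqcup_{a+b=n-1} \Av_a(231) \times \Av_b(231).\]
Fixing a $231$-avoiding permutation $\rho$ of size $m:=|\rho|$, the set of $\sigma \in \Av_n(231)$ whose left factor in the decomposition equals $\rho$ is in bijection (via $\sigma \mapsto \pi$) with $\Av_{n-1-m}(231)$. In particular this set is empty when $n < m+1$, and otherwise has cardinality $\Cat_{n-1-m}$.

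Dividing by $|\Av_n(231)| = \Cat_n$, I get the exact identity
\[ \PP(\bm \tau_n = \rho) = \frac{\Cat_{n-1-m}}{\Cat_n} \qquad (n \geq m+1).\]
Plugging in the asymptotics $\Cat_n \sim \pi^{-1/2}\, 4^n\, n^{-3/2}$ recalled at the beginning of Section~3 (or equivalently, using the explicit formula for $\Cat_n$), the ratio on the right-hand side converges to $4^{-m-1}$ as $n \to +\infty$, giving the claimed limit for $\bm\tau_n$.

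The statement for $\bm\pi_n$ is proved by exactly the same argument: the $\sigma \in \Av_n(231)$ with right factor equal to $\rho$ are in bijection with $\Av_{n-1-m}(231)$ via $\sigma \mapsto \tau$, so $\PP(\bm\pi_n = \rho) = \Cat_{n-1-m}/\Cat_n$ as well, and the same limit applies. There is no real obstacle: the whole proof is a one-line counting argument combined with the well-known Catalan asymptotics, and the only thing to be slightly careful about is the off-by-one in the sizes coming from the extra element ``$1$'' in $1 \ominus \pi$.
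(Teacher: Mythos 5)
Your proposal is correct and follows essentially the same argument as the paper: count the permutations in $\Av_n(231)$ with prescribed left (or right) factor $\rho$ via the unique decomposition $\sigma = \tau \oplus (1 \ominus \pi)$, obtaining the exact ratio $\Cat_{n-1-|\rho|}/\Cat_n$, and then pass to the limit. The handling of the off-by-one from the extra element and of the symmetric case for $\bm\pi_n$ matches the paper's reasoning.
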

 \begin{proof}
Let $k$ be the size of $\rho$. There are exactly $ \Cat_{n-k-1}$ permutations $\sigma$ in $\Av_n(231)$
of the form $\rho \oplus(1\ominus \pi)$: indeed, one can choose $\pi$ freely in $\Av_{n-k-1}(231)$.
Therefore $$\PP( \tau_n=\rho)=\frac{ \Cat_{n-k-1}}{ \Cat_n},$$ and the limit is $4^{-k-1}$ as claimed.
The equality $\lim_{n \to +\infty} \PP( \pi_n=\rho) = 4^{-|\rho|-1}$ is proved similarly.
\end{proof}

Let $\mathcal F$ and $\mathcal F'$ be two {\bf finite} subsets of $\Av(231)$.
We consider the event 
\[ E_{\mathcal F,\mathcal F'}: \ ( \tau_n \in \mathcal F) \vee ( \pi_n \in \mathcal F'). \]
Clearly, $E_{\mathcal F,\mathcal F'}$ is a first order property.
For $n$ large enough, the events $ \tau_n \in \mathcal F$ and $ \pi_n \in \mathcal F'$ are incompatible
since $| \tau_n|+| \pi_n|=n-1$. We therefore have, using Lemma~\ref{lem:Asymptotique_Tau}
\[ \lim_{n\to+\infty} \PP( E_{\mathcal F,\mathcal F'}) = 
\sum_{k \le K} (|\mathcal F_k| + |\mathcal F'_k|) \, 4^{-k-1},\]
where $\mathcal F_k$ (resp. $\mathcal F'_k$) is the set of permutations of size $k$ in $\mathcal F$ (resp. $\mathcal F'$),
and where $K$ is the maximal size of a permutation in either $\mathcal F$ or $\mathcal F'$.
For each $k \le K$, the quantity $|\mathcal F_k| + |\mathcal F'_k|$ can take any value between
$0$ and $2 \Cat_k$, so that $L$ contains the set 
\[L':=\left\{ \sum_{k= 0}^K a_k 4^{-k-1}, \ K \ge 0\text{ and } 0 \le a_k \le 2 \Cat_k\right\}.\]
Let $(p_i)_{i\ge 0}$ be the non-increasing sequence containing $4^{-k-1}$ exactly $2\Cat_k$
times for each $k\ge 0$, and no other elements.
We have
\[\sum_{i\ge 0} p_i= \sum_{k= 0}^{+\infty} 2 \Cat_k 4^{-k-1} = \tfrac12 C(\tfrac14)=1,\]
where we recall that $C(z)=(1-\sqrt{1-4z})/(2z)$ is the Catalan generating series.
On the other hand,
\[L' = \left\{ \sum_{i \in J} p_i: \ J \subset \mathbb{N}, |J| <+\infty\right\}.\]
In words, $L'$ is the set of finite subsums of $\sum_{i\ge 0} p_i$.
Its topological closure contains the set $L''$ of all (finite or infinite) subsums of $\sum_{i\ge 0} p_i$.
Observe finally that $p_i \le \sum_{j>i} p_j$ for all $i$.
Applying Lemma~\ref{lem:Kakeya}, we have $L''=[0,1]$,
concluding the proof of Theorem~\ref{thm:ConvLaw_Av231}.

 \section*{Acknowledgements}
 VF is partially supported by the Future Leader program of the LUE initiative 
 (Lorraine Université d'Excel\-lence).
 MA, MB and VF are grateful to the Department of Mathematics of the Universitat Politècnica de Catalunya for their visit in the Fall 2016 during which this research started. 
 Finally,
 the authors warmly thank anonymous referees, whose comments helped them improve
 the presentation of the paper.

\bibliography{biblio.bib}

\end{document}